\documentclass[11pt]{amsart}
\usepackage[T1]{fontenc}
\usepackage{lmodern}
\usepackage[margin=1.12in]{geometry}
\usepackage{amsmath,amssymb,amsthm,mathtools}
\usepackage{microtype}
\usepackage[colorlinks=true,citecolor=blue,linkcolor=red,urlcolor=blue]{hyperref}
\hypersetup{
 pdftitle={Numerical Kawamata--Viehweg vanishing for rank three matroids},
 pdfauthor={Ronnie Cheng}
}

\numberwithin{equation}{section}
\newtheorem{theorem}{Theorem}[section]

\newtheorem{lemma}[theorem]{Lemma}
\newtheorem{corollary}[theorem]{Corollary}
\newtheorem{conjecture}[theorem]{Conjecture}
\newtheorem{mainthm}{Theorem}

\theoremstyle{definition}
\newtheorem{definition}[theorem]{Definition}
\newtheorem{remark}[theorem]{Remark}

\newcommand{\Z}{\mathbb Z}
\newcommand{\Q}{\mathbb Q}
\newcommand{\R}{\mathbb R}
\newcommand{\C}{\mathbb C}
\newcommand{\PP}{\mathbb P}
\newcommand{\OO}{\mathcal O}
\newcommand{\LL}{\mathcal L}
\newcommand{\FF}{\mathcal F}
\newcommand{\cH}{\mathcal H}
\newcommand{\cP}{\mathcal P}
\newcommand{\degM}{\deg_M}
\newcommand{\invchi}[2]{\chi_{#1}([\LL_{#2}^{-1}])}
\DeclareMathOperator{\rk}{rk}
\DeclareMathOperator{\Bl}{Bl}

\title[Numerical vanishing in rank three]{Numerical Kawamata--Viehweg vanishing\\ for rank-three matroids}
\author{Ronnie Cheng}
\address{Department of Mathematics, Stanford University, USA}
\email{rtcheng@stanford.edu}
\date{October 29, 2026}

\begin{document}
\begin{abstract}
We prove the numerical Kawamata--Viehweg conjecture for every matroid of rank three: the Euler characteristic of the inverse of an integral big and nef line bundle class is nonnegative. The proof reduces the conjecture to an inequality for a stone game.
\end{abstract}
\maketitle
\section{Introduction}\label{sec:intro}
Let $U$ be a vector space of dimension $r$ over a field $k$, and let $(\ell_e)_{e\in E}$ be a nonempty finite family of linear forms spanning $U^\vee$. They define a matroid $M$ of rank $r$ on $E$ by
\[
 \rk_M(S)=\dim_k\operatorname{span}\{\ell_e:e\in S\},
 \qquad S\subseteq E.
\]
A matroid is \emph{realizable} over $k$ if it arises in this way. The injective map $u\mapsto(\ell_e(u))_{e\in E}$ identifies $U$ with a subspace $L\subseteq k^E$, called a \emph{realization} of $M$. From now on, all matroids are assumed loopless; in a realization, this means $\ell_e\ne0$ for every $e\in E$. 

After identifying $U$ with $L$, the hyperplanes $H_e=\PP(\ker\ell_e)\subset\PP(L)$ are the restrictions of the coordinate hyperplanes in $\PP(k^E)$. The arrangement complement $\PP(L)\setminus\bigcup_{e\in E}H_e$ admits a smooth projective de Concini--Procesi wonderful compactification $W_L$ of dimension $d=r-1$ \cite{DCP}. We use the maximal wonderful compactification, obtained from $\PP(L)$ by successively blowing up the strict transforms of all nonempty proper intersection subspaces in increasing order of dimension. We write $\pi:W_L\to\PP(L)$ for the resulting morphism. In particular, for a realization of a rank-three matroid, $W_L$ is the blowup of $\PP^2_k$ at the intersection points corresponding to rank-two flats.

Although different realizations of $M$ can yield nonisomorphic varieties $W_L$, their Chow rings admit a presentation depending only on the lattice of flats \cite{DCP,FY}. For an arbitrary matroid $M$, we use the following presentation of its \emph{Chow ring}, with integral coefficients \cite[Definition~1.3]{AHK}:
\begin{equation}\label{eq:chow-presentation}
 A^*(M)=\Z[x_F:\varnothing\ne F\subsetneq E\text{ a flat}]/(I+J),
\end{equation}
where $I$ is generated by $x_Fx_G$ for incomparable flats $F$ and $G$, and $J$ is generated by $\sum_{F\ni i}x_F-\sum_{F\ni j}x_F$ for $i,j\in E$. Recall that a flat of $M$ is a subset $F\subseteq E$ such that adding any element of $E\setminus F$ increases its rank. 
For every realization $L$ and every nonempty proper flat $F$, let $E_F$ denote the divisor on $W_L$ obtained from the exceptional divisor when the strict transform of $\bigcap_{e\in F}H_e$ is blown up. In particular, when $\rk F=1$, this blowup is an isomorphism and $E_F$ is simply the strict transform of the corresponding hyperplane. Then $x_F\mapsto[E_F]$ gives an isomorphism
\[
 A^*(M)\simeq A^*(W_L).
\]
The degree map $\degM:A^d(M)\to\Z$ sends each complete flag monomial to $1\in\Z$ and agrees with the geometric degree map. The class
\begin{equation}\label{eq:alpha}
 \alpha=\sum_{F\ni e}x_F
\end{equation}
is independent of $e\in E$ and corresponds to the pullback under $\pi$ of the hyperplane class on $\PP(L)$.

Huh--Katz used this intersection theory to prove log-concavity of the characteristic polynomial for matroids realizable over any field \cite{HK}. Adiprasito--Huh--Katz then proved that $A^*(M)$ satisfies Poincar\'e duality, hard Lefschetz, and the Hodge--Riemann relations for every matroid, with no realizability assumption \cite{AHK}. Thus the ring retains the ``K\"{a}hler package'' of a smooth projective variety even when there is no underlying arrangement. 

The same progression from geometry to combinatorics underlies matroid $K$-theory. Building on the tautological classes and $K$-theoretic constructions of Berget--Eur--Spink--Tseng \cite{BEST}, Larson--Li--Payne--Proudfoot gave presentations of $K^0(W_L)$ and formulas for its Euler characteristic depending only on $M$. These define a \emph{$K$-ring} $K(M)$ and an \emph{Euler characteristic} $\chi_M:K(M)\to\Z$ for every matroid \cite{LLPP}. A divisor class $D\in A^1(M)$ determines a line bundle class $[\LL_D]\in K(M)$, and for every realization $L$ we have
\begin{equation}\label{eq:geometric-K}
 K(M)\simeq K^0(W_L),\qquad
 \chi_M([\LL_D])=\chi(W_L,\LL_D),
\end{equation}
where on the right $\LL_D$ is the line bundle on $W_L$ with first Chern class $D$. 

In \cite{Cheng}, the author constructed a tangent class $T_M\in K(M)$ that recovers $[T_{W_L}]$ for every realization $L$. Its Todd class satisfies the Hirzebruch--Riemann--Roch identity \cite[Corollary~3.20]{Cheng}
\begin{equation}\label{eq:matroid-HRR}
 \chi_M(\xi)=\degM\bigl(\operatorname{ch}(\xi)\operatorname{td}(T_M)\bigr),
 \qquad \xi\in K(M),
\end{equation}
where $\operatorname{ch}$ is the usual Chern character and $\degM$ is applied to the component of degree $d$ in $A^*(M)_\Q$.

\subsection{Positivity and the numerical vanishing problem}\label{sec:positivity}
The notion of combinatorial nefness below comes from \cite[Definitions~4.1--4.2]{AHK}, expressed in terms of the generators $x_F$. A flag of flats is a chain of flats under inclusion.
\begin{definition}\label{def:combinatorial-positivity}
A class $D\in A^1(M)_\R$ is \emph{combinatorially nef} if, for every flag $\FF$ of nonempty proper flats, including the empty flag, it admits an expression
\[
 D=\sum_G c_Gx_G
\]
with real coefficients such that $c_F=0$ for $F\in\FF$ and $c_G\ge0$ whenever $G\notin\FF$ and $\FF\cup\{G\}$ is a flag. 
\end{definition}

For every realization $L$, combinatorially nef classes are geometrically nef on $W_L$ \cite[Proposition~5.5(2)]{Cheng}. Following \cite[Definition~5.7]{Cheng}, we call a combinatorially nef class $D$ \emph{big} if $\degM(D^d)>0$. Consequently, a combinatorially big and nef class is also geometrically big and nef.

In characteristic zero, Kawamata--Viehweg vanishing \cite{Lazarsfeld} gives
\[
 (-1)^d\chi(W_L,\LL_D^{-1})
 =h^d(W_L,\LL_D^{-1})\ge0
\]
for every big and nef line bundle $\LL_D$. In view of \eqref{eq:geometric-K}, this motivates a statement involving only the matroid.
\begin{conjecture}[Numerical matroid Kawamata--Viehweg; {\cite[Conjecture 5.16]{Cheng}}]\label{conj:KV}
For every matroid $M$ of rank $r$ and every combinatorially big and nef divisor $D\in A^1(M)$,
\[
 (-1)^{r-1}\invchi{M}{D}\ge0.
\]
\end{conjecture}

\subsection{Main result and proof strategy}
We prove Conjecture~\ref{conj:KV} for all matroids of rank three.

\begin{mainthm}\label{thm:rank3}
Let $M$ be a loopless matroid of rank three. If $D\in A^1(M)$ is combinatorially big and nef, then
\[
 \invchi{M}{D}\ge0.
\]
\end{mainthm}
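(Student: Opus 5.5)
Since $d=2$, I would compute $\invchi{M}{D}$ explicitly via the Hirzebruch--Riemann--Roch identity \eqref{eq:matroid-HRR}. On a surface, Riemann--Roch gives $\chi(\LL^{-1})=\chi(\OO)+\tfrac12\bigl(D^2+D\cdot K\bigr)$, where $K=-c_1(T_M)$. For rank three, $W_L$ is $\PP^2$ blown up at the points corresponding to the rank-two flats $P$ with $|P|\ge 3$; the rank-two flats with $|P|=2$ give $x_P$ of self-intersection $-1$ after the blowup. I would write $A^1(M)$ in the basis $\alpha$ together with $x_P$ for rank-two flats $P$. Then $D=a\alpha-\sum_P b_P x_P$ with integers $a,b_P$, and the HRR identity gives
\[
\invchi{M}{D}=1+\tfrac12\Bigl(a^2-\sum_P b_P^2\Bigr)-\tfrac12\Bigl(3a-\sum_P b_P\Bigr)
=1+\binom{a}{2}-a-\sum_P\binom{b_P}{2}+\ldots
\]
More cleanly, this is $\binom{a-1}{2}-\sum_P\binom{b_P}{2}$. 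Here $\chi_M(\OO)=1$ and $\degM(\alpha\cdot c_1(T_M))=3$, $\degM(x_P\cdot c_1(T_M))=1$, which I would verify from \cite{Cheng} or check against a realizable example. The claim then reduces to
\[
\sum_P \binom{b_P}{2}\le\binom{a-1}{2}.
\]

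Next I would translate combinatorial nefness into linear inequalities on $(a,b_P)$. I would apply Definition~\ref{def:combinatorial-positivity} with the empty flag, with the flags $\{P\}$ for rank-two $P$, and with the flags $\{e\}$ for atoms. Intersecting with curves suggests the conditions $b_P\ge 0$ and $a\ge \sum_{P\ni e} b_P$ for every atom $e$; the latter is $D\cdot x_e\ge0$. Using the linear relations in $J$, I would also derive the remaining nef conditions, for instance $a\ge b_P$ via $x_P$ and the atoms. Bigness is $a^2>\sum_P b_P^2$. I expect the inequalities $b_P\ge0$ and $\sum_{P\ni e}b_P\le a$ for every $e$ to characterize nefness in rank three, and I would prove this directly from the definition.

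The core is then a purely combinatorial inequality, the ``stone game''. Place $b_P$ stones on each line $P$ of a rank-three matroid (a linear space on $E$). Each point $e$ sees at most $a$ stones on the lines through it, and $\sum b_P^2<a^2$. One must show $\sum_P\binom{b_P}{2}\le\binom{a-1}{2}$. My plan is to count pairs. Fix a line $P_0$ with $b_{P_0}$ maximal. Every other line $Q$ meets $P_0$ in at most one point, and any two lines meet in at most one point. I would aim to show $\sum_P\binom{b_P}{2}$ is maximized, subject to the point constraints, by configurations like a near-pencil, or by a single line, where the bound is tight. One case is when all $b_P$ sit on lines through a common point $e$. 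Then $\sum b_P\le a$, so $\sum\binom{b_P}{2}\le\binom{a}{2}$ in general. In that case bigness, together with the requirement that $D$ pair positively with $x_e$, should sharpen this to $\binom{a-1}{2}$. I would handle this with an exchange argument: moving stones between lines (merging or shifting) does not decrease the left side while preserving the constraints, until an extremal configuration is reached.

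The main obstacle is this extremal step. Convexity of $\binom{b}{2}$ pushes stones onto few lines, but the point constraints couple lines through shared points, and bigness is needed to exclude boundary cases such as $b_P=a$. Examples like $b_P=a$ on a single line are big only if other lines contribute. I would first try an induction on $a$: delete one stone from each line through a suitably chosen point, which reduces $a$ by one and changes both sides compatibly. I would carefully check the base cases and the integrality needed for the ``$-1$''.
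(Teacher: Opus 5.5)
Your Riemann--Roch computation is correct and matches the paper: $\invchi{M}{D}=\binom{a-1}{2}-\sum_P\binom{b_P}{2}$. (A small correction: the maximal wonderful compactification blows up \emph{all} rank-two flats, including those with $|P|=2$.) The genuine gap is in how you translate nefness.

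The inequalities $b_P\ge0$ and $\sum_{P\ni e}b_P\le a$ come only from the flags $\{P\}$ and $\{e\}$. They do not characterize nefness, and together with bigness they do not imply the inequality you want. The empty-flag condition asks for $D=\sum_pn_px_p+\sum_Hn_Hx_H$ with all coefficients nonnegative. After intersecting with $\alpha$ and with each $x_H$, this becomes a \emph{global covering condition}: there are weights $n_p\ge0$ on the rank-one flats with $\sum_pn_p=a$ and $\sum_{p\subset H}n_p\ge b_H$ for every rank-two flat $H$. The local conditions do not imply it.

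Here is a counterexample. In $U_{3,4}$, take $D=10\alpha-7x_{12}-7x_{34}$.
\begin{itemize}
\item Every point lies in exactly one of the flats $\{1,2\}$ and $\{3,4\}$, so $\sum_{P\ni e}b_P=7\le10$.
\item $D^2=100-98=2>0$.
\item Yet $\binom{9}{2}-2\binom{7}{2}=-6<0$.
\end{itemize}
This $D$ fails the empty-flag condition, because the two disjoint flats would need total weight $14>10$. Geometrically, for four general lines, the strict transform $\widetilde L$ of the line through the two double points has $D\cdot\widetilde L=-4$. So the ``stone game'' you formulate, with only point constraints and $\sum_Pb_P^2<a^2$, is false, and no exchange or induction argument can prove it.

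The paper's reduction keeps exactly this missing information. Clearing denominators gives $mD=\sum_pn_px_p+\sum_Hn_Hx_H$ with integers $n_p,n_H\ge0$, hence $\sum_pn_p=ma$ and $\sum_{p\subset H}n_p\ge mb_H$. One then takes $ma$ piles of $a$ stones, labelled by points. Each $H$ with $b_H>0$ makes a move of size $b_H$ on $mb_H$ piles labelled inside $H$. The point conditions make these moves legal, and bigness means at least one stone survives.

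Even with the right hypotheses, your extremal step has no working mechanism. Lowering $a$ and each positive $b_H$ with $H\ni e$ by one need not preserve the point condition at other points. It also need not preserve bigness when $\sum_{H\ni e}b_H=a$.

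The paper's argument runs as follows.
\begin{itemize}
\item From legality, by averaging over piles and inducting on $h$, it derives the integer tests $\sum_iu_i(u_i+1)\le h(h+3)\Rightarrow\sum_ib_iu_i\le ah$.
\item It realizes $A=aH-\sum_ib_iE_i$ on the blowup of $\PP^2_{\C}$ at very general points of a smooth cubic $C$. There, every integral curve $B$ other than the $E_i$ and $\widetilde C$ has $\chi(\OO_S(B))\ge1$.
\item The case $A\cdot\widetilde C\le0$ is handled directly. Otherwise the tests make $A$ nef, and classical Kawamata--Viehweg vanishing gives the bound.
\end{itemize}
The Cremona-type reduction in the final remark is a combinatorial alternative to the auxiliary surface.
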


The theorem applies to every integral big and nef class in the combinatorial nef cone, with no realizability assumption on $M$.

Write
\[
 D=a\alpha-\sum_{H\in\cH}b_Hx_H,
\]
where $\cH$ is the set of rank-two flats and $\alpha$ is the class defined in \eqref{eq:alpha}. It can be shown that $a$ and $b_H$ are nonnegative when $D$ is nef. The rank-three intersection pairing and Riemann--Roch formula give
\[
 \degM(D^2)=a^2-\sum_Hb_H^2,
 \qquad
 \invchi{M}{D}=\binom{a-1}{2}-\sum_H\binom{b_H}{2}.
\]
Thus the required numerical vanishing is equivalent to
\begin{equation}\label{eq:rank3-intro}
 \sum_Hb_H(b_H-1)\le(a-1)(a-2).
\end{equation}

We prove the theorem by encoding $a$ and the sequence $b_H$ as a legal play of a stone game, whose rules and score bound are stated in Theorem~\ref{thm:stone-game}. The positive coefficients $b_H$ become the move sizes, and bigness says that at least one stone remains. The score bound then gives Theorem~\ref{thm:rank3}.

To prove Theorem~\ref{thm:stone-game}, we construct a complex surface $S$ that is a blowup of $\PP_{\C}^2$ at points on a smooth cubic. Consider the divisor $D = a\alpha - \sum b_HE_H$ on $S$, where $\alpha$ is the pullback of the hyperplane class and $E_H$ are the exceptional divisors. After an immediate case is excluded, the divisor $D$ is big and nef, and classical Kawamata--Viehweg vanishing on $S$ proves the required inequality. Remark~\ref{rem:cremona-proof} outlines a combinatorial proof of the game inequality motivated by Cremona transformations.

As a consequence of Theorem~\ref{thm:rank3}, we also obtain monotonicity in nef directions. If $D$ is big and nef and $E$ is nef, then
\[
 \invchi{M}{D+E}\ge\invchi{M}{D};
\]
see Corollary~\ref{cor:monotonicity}.

\subsection{Relation to earlier work and organization}
Eur--Larson prove $K$-theoretic positivity for nonnegative integral combinations of simplicial line bundles \cite[Theorem~1.5]{EL}. Eur--Fink--Larson prove positivity results for divisor classes induced by nef divisors on the permutohedral variety \cite[Theorem~B]{EFL}. These results imply the numerical Kawamata--Viehweg inequality for the corresponding classes when their top self-intersection is positive. Theorem~\ref{thm:rank3} proves the numerical inequality for every combinatorially big and nef class in rank three.

Section~\ref{sec:rank3} reduces Theorem~\ref{thm:rank3} to the stone game (Theorem~\ref{thm:stone-game}) and deduces monotonicity in nef directions. Section~\ref{sec:stone-game} proves Theorem~\ref{thm:stone-game}.

\subsection{Acknowledgement}
The author would like to thank Ting-Wei Chao for sharing the proof of the subcase $m=1$ of Theorem~\ref{thm:stone-game}. 
\subsection{Use of AI}
This work was developed with substantial assistance from ChatGPT.
Its use included proposing proof strategies and revising the exposition. The author takes full responsibility for the final manuscript.
\section{Reduction to a stone game}\label{sec:rank3}

Throughout this section, $M$ is a loopless matroid of rank three on a finite ground set $E$. All divisor classes in the vanishing and monotonicity statements are integral.

Nefness is as in Definition~\ref{def:combinatorial-positivity}, and we write \emph{nef} for combinatorially nef. The Chow ring, degree map, and class $\alpha$ are those introduced in Section~\ref{sec:intro}.

\subsection{Rank-three coordinates and Euler characteristic}

Write $\cP$ and $\cH$ for its flats of ranks one and two, respectively. The Chow relations give
\[
 x_p=\alpha-\sum_{H\supset p}x_H
 \qquad(p\in\cP),
\]
so $\alpha$ and the classes $x_H$, for $H\in\cH$, span $A^1(M)$. They satisfy
\begin{equation}\label{eq:rank-three-pairing}
 \degM(\alpha^2)=1,\qquad
 \alpha x_H=0,\qquad
 \degM(x_H^2)=-1,\qquad
 x_Hx_J=0\quad(H\ne J).
\end{equation}
These classes form an integral basis. We henceforth identify products of two divisor classes with their degrees. For $D\in A^1(M)$, write
\[
 D=a\alpha-\sum_{H\in\cH}b_Hx_H,
 \qquad a,b_H\in\Z.
\]
Then
\begin{equation}\label{eq:surface-degrees}
 D^2=a^2-\sum_Hb_H^2,\qquad
 Dx_H=b_H,\qquad
 Dx_p=a-\sum_{H\supset p}b_H.
\end{equation}

If $D$ is nef, then $Dx_F\ge0$ for every nonempty proper flat $F$. Indeed, apply Definition~\ref{def:combinatorial-positivity} to the flag $\{F\}$. The resulting expression has no $x_F$ term and has nonnegative coefficients at the other flats comparable to $F$. Multiplication by $x_F$ kills the incomparable terms and leaves a nonnegative linear combination of monomials corresponding to complete flags. Consequently,
\begin{equation}\label{eq:nef-bounds}
 b_H\ge0,\qquad
 \sum_{H\supset p}b_H\le a\quad(p\in\cP).
\end{equation}
These inequalities imply $a\ge0$. If $a=0$, then every $b_H$ vanishes, since every flat of rank two contains a flat of rank one. Thus every nonzero nef class has $a>0$ and $b_H\ge0$, as in \cite[Lemma~5.26]{Cheng}.

In rank three, the canonical class is
\[
 K_M=-c_1(T_M)=-3\alpha+\sum_Hx_H
\]
by \cite[Corollary~3.22]{Cheng}. The Hirzebruch--Riemann--Roch formula \eqref{eq:matroid-HRR} gives
\begin{equation}\label{eq:surface-RR}
 \chi_M([\LL_D])=1+\frac{D(D-K_M)}2.
\end{equation}
In particular, using $[\LL_{-D}]=[\LL_D^{-1}]$,
\begin{equation}\label{eq:surface-Euler}
 \invchi{M}{D}
 =1+\frac{D(D+K_M)}2
 =\binom{a-1}{2}-\sum_H\binom{b_H}{2}.
\end{equation}

\subsection{From a nef divisor to a legal game}

The following statement is independent of matroids. We will prove it in Section~\ref{sec:stone-game}.

\begin{theorem}[The stone game]\label{thm:stone-game}
Let $a,m$ be positive integers. Start with $ma$ piles, each containing $a$ stones. A move of positive integer size $b$ chooses $mb$ distinct piles containing at least $b$ stones each, removes $b$ stones from each chosen pile, and earns $b(b-1)$ points. If at least one stone remains after a finite sequence of legal moves, the total score is at most
\[
 (a-1)(a-2).
\]
Equivalently, if a legal sequence has move sizes $b_1,\ldots,b_s$ satisfying $\sum_i b_i^2<a^2$, then
\begin{equation}\label{eq:stone-game-bound}
 \sum_i b_i(b_i-1)\le(a-1)(a-2).
\end{equation}
\end{theorem}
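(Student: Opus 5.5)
We follow the strategy announced in the introduction: realize a legal play as a configuration of points on a smooth plane cubic, and apply Kawamata--Viehweg vanishing on the blown-up surface.

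\textbf{Equivalence of the two forms.} There are $ma$ piles of $a$ stones, so $ma^2$ stones in total. A move of size $b$ removes $mb^2$ stones. Hence at least one stone remains if and only if $\sum_i b_i^2<a^2$, and the two formulations agree.

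\textbf{Geometric encoding.} Fix a smooth cubic $C\subset\PP^2_\C$ and identify $C$ with its group law, so that three points are collinear exactly when they sum to zero. Each pile will correspond to a point of $C$, or more precisely to a general line meeting $C$ in three points. The guiding observation is that each pile plays the role of a rank-one flat $p$ with $\sum_{H\supset p}b_H\le a$. A move of size $b$ will correspond to blowing up a point $q_i$ with multiplicity $b$.

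\textbf{Choosing the curves.} We choose distinct points $q_1,\dots,q_s$ on $C$ in general position on $C$, so that no unexpected curves pass through them. For each pile $j$, let $\Gamma_j$ be the strict transform of a curve of degree $m$ (say $m$ general lines through a fixed configuration) passing through the points $q_i$ of the moves that used pile $j$. Each move of size $b$ uses $mb$ piles, which should make the intersection counts consistent.

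On $S=\Bl_{q_1,\dots,q_s}\PP^2$, set $D=a\alpha-\sum_i b_iE_i$. We have $D^2=a^2-\sum b_i^2>0$. For nefness, we aim to check $D\cdot\Gamma\ge0$ for every irreducible curve $\Gamma$:
\begin{itemize}
\item The strict transform of $C$ has $D\cdot\tilde C=3a-\sum_i b_i$, and the pile constraints should force $\sum_i b_i\le 3a$.
\item Curves $\Gamma\ne\tilde C$ meet $\tilde C$ nonnegatively, so a Zariski decomposition argument is available for them.
\item Exceptional curves satisfy $D\cdot E_i=b_i\ge0$.
\end{itemize}
The immediate excluded case is a single huge move, or $D$ not meeting $\tilde C$ nonnegatively; there one checks the score bound directly, since $b^2<a^2$ gives $b(b-1)\le(a-1)(a-2)$ for $b\le a-1$.

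Once $D$ is big and nef, Kawamata--Viehweg gives $\chi(S,-D)=h^2(S,-D)\ge0$. Riemann--Roch, as in \eqref{eq:surface-Euler}, then yields
\[
\binom{a-1}{2}-\sum_i\binom{b_i}{2}\ge0,
\]
which is exactly \eqref{eq:stone-game-bound}.

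\textbf{Main obstacle.} The hard step is proving nefness of $D$ from the combinatorial pile constraints, since arbitrary curves through the $q_i$ are not controlled by the piles. The pile structure must be converted into the positivity statement: if $D\cdot\tilde C\ge0$ and the points are general on $C$, then $D$ is nef. My first attempt would be the standard criterion for blowups at points on an anticanonical cubic, namely that $-K_S=\tilde C$ is effective and irreducible. Then $D$ is nef if and only if $D\cdot\tilde C\ge0$ and $D\cdot R\ge0$ for all $(-2)$- and $(-1)$-curves $R$. Choosing the $q_i$ very general in $\mathrm{Pic}^0(C)$ eliminates the $(-2)$-curves.

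For $(-1)$-curves $R$, the inequality $D\cdot R\ge0$ reads $ad\ge\sum_i b_i\mu_i$. One would prove it by a Cremona-type reduction, which is where the pile counting with the parameter $m$ must enter. Reducing via quadratic transformations to $d\le1$ leaves the checks $a\ge b_i+b_j$ for lines through pairs. Lines correspond to pairs of moves, and pairs of moves sharing a pile satisfy $b_i+b_j\le a$. A counting argument is needed to guarantee that any two moves can be arranged to share a pile; when they cannot, the excluded case should occur. This reduction is the step I expect to require the most care.
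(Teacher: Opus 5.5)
\textbf{There is a genuine gap, and it is the step you flag yourself.} Your frame (very general points on a smooth cubic, prove nefness, then Kawamata--Viehweg and Riemann--Roch) is the paper's. The Harbourne-type criterion you cite, which tests only against $\widetilde C$ and the $(-1)$-curves once $(-2)$-curves are excluded, is acceptable. The problem is how you propose to prove $D\cdot R\ge0$ for a $(-1)$-curve $R=dH-\sum_i\mu_iE_i$.

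You plan to Cremona-reduce $R$ to a line and then use $b_i+b_j\le a$. A quadratic transformation $w$ acts on $D$ as well, since $D\cdot R=w(D)\cdot w(R)$. The transformed tuple $w(D)$ need not come from a legal play, as Remark~\ref{rem:cremona-proof} stresses. So pair inequalities for the original moves say nothing about it. Already the conic test $b_1+\dots+b_5\le2a$ and the nodal-cubic test $2b_1+b_2+\dots+b_7\le3a$ need genuinely multi-move counting.

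The missing idea is Lemma~\ref{lem:integer-tests}: for a legal play, $\sum_iu_i(u_i+1)\le h(h+3)$ implies $\sum_ib_iu_i\le ah$. Let $S_i$ be the set of piles used by move $i$. The proof is by induction on $h$. If $\sum_ib_iu_i>ah$, then summing $\sum_{i:j\in S_i}u_i$ over the $ma$ piles gives $m\sum_ib_iu_i>mah$. So some pile $j$ has $\sum_{i:j\in S_i}u_i\ge h+1$. Lowering those positive $u_i$ by one lands in the case $h-1$. Meanwhile the pile bound $\sum_{i:j\in S_i}b_i\le a$ keeps $\sum_ib_iu_i'>a(h-1)$, a contradiction. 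This is exactly where $m$ enters.

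The paper then avoids Cremona entirely. Every integral curve $B\ne E_i,\widetilde C$ satisfies $\chi(\mathcal{O}_S(B))=p_a(B)+B\cdot\widetilde C\ge1$, where very generality gives $B\cdot\widetilde C>0$. This is precisely the lemma's hypothesis, and $(-1)$-curves satisfy it with equality. A Cremona route can also be made to work, but only after you have a Cremona-stable family of inequalities such as these tests. Separately, pairs never need to be ``arranged'' to share a pile: $b_i+b_j>a$ would give $mb_i+mb_j>ma$, forcing a shared pile that loses more than $a$ stones.

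\textbf{Your first bullet is false.} The pile constraints do not force $\sum_ib_i\le3a$. Take $m=1$ and $a^2-1$ moves of size $1$. This play is legal and has $\sum_ib_i=a^2-1>3a$ for $a\ge4$. So $D\cdot\widetilde C<0$ genuinely occurs, and then $D$ is not nef. Your treatment of the excluded case, a single move of size at most $a-1$, does not cover this. The right argument is the identity
\[
 \binom{a-1}{2}-\sum_i\binom{b_i}{2}=1+\tfrac12\bigl(D^2-D\cdot\widetilde C\bigr),
\]
whose right side is positive whenever $D\cdot\widetilde C\le0<D^2$. The curves $\Gamma_j$ in your geometric encoding play no role and can be dropped.
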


For $a\ge2$, a single move of size $a-1$ attains the bound and leaves $m(2a-1)$ stones. For $a=1$, the empty sequence attains the bound.

\begin{proof}[Proof of Theorem~\ref{thm:rank3}, assuming Theorem~\ref{thm:stone-game}]
Let $D=a\alpha-\sum_Hb_Hx_H$ be big and nef. Nefness for the empty flag expresses $D$ as a nonnegative linear combination of the classes $x_F$. The defining linear equations have rational coefficients, so the coefficients may be chosen rational. Clearing denominators gives
\[
 mD=\sum_p n_px_p+\sum_H n_Hx_H,
 \qquad m\in\Z_{>0},\quad n_p,n_H\in\Z_{\ge0}.
\]
Intersecting with $\alpha$ and $x_H$, respectively, yields
\begin{equation}\label{eq:pile-counts}
 \sum_p n_p=ma,\qquad
 \sum_{p\subset H}n_p=mb_H+n_H\ge mb_H.
\end{equation}

Start with $n_p$ piles labelled by each $p\in\cP$, each containing $a$ stones. For every $H$ with $b_H>0$, choose $mb_H$ piles whose labels are contained in $H$, and make a move of size $b_H$ on these piles. There are enough such piles by \eqref{eq:pile-counts}. A pile labelled by $p$ loses at most $\sum_{H\supset p}b_H\le a$ stones in total, by \eqref{eq:nef-bounds}. Hence all these moves are legal, in any order. They leave
\[
 ma^2-m\sum_Hb_H^2=mD^2>0
\]
stones. Theorem~\ref{thm:stone-game} therefore gives
\[
 \sum_Hb_H(b_H-1)\le(a-1)(a-2),
\]
and \eqref{eq:surface-Euler} implies $\invchi{M}{D}\ge0$.
\end{proof}

\begin{corollary}\label{cor:monotonicity}
Let $M$ have rank three, and let $D,E\in A^1(M)$. If $D$ is big and nef and $E$ is nef, then
\[
 \invchi{M}{D+E}\ge\invchi{M}{D}.
\]
\end{corollary}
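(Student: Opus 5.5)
The plan is to compare the two Euler characteristics through the Riemann--Roch formula \eqref{eq:surface-Euler}, and then feed the result back into Theorem~\ref{thm:rank3}. Write $f(k)=\invchi{M}{D+kE}$ for integers $k\ge0$. Expanding $1+\tfrac12(D+kE)(D+kE+K_M)$ gives
\[
 f(k)=f(0)+k\Bigl(DE+\tfrac{EK_M}{2}\Bigr)+\tfrac{k^2}{2}E^2 .
\]
Setting $k=1$ and comparing with \eqref{eq:surface-Euler} for $E$ also yields the identity
\[
 f(1)-f(0)=DE+\invchi{M}{E}-1 .
\]

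\textbf{Preliminary positivity.} First I check that nef classes have nonnegative products. By Definition~\ref{def:combinatorial-positivity} with the empty flag, $E$ is a nonnegative combination of the $x_F$. The argument before \eqref{eq:nef-bounds} shows $Dx_F\ge0$ and $Ex_F\ge0$ for every flat $F$. Hence $DE\ge0$ and $E^2\ge0$. The set of nef classes is closed under addition and nonnegative scaling, because for each flag the conditions define a convex cone. So $D+kE$ is nef for $k\ge0$. It is also big, since $(D+kE)^2=D^2+2kDE+k^2E^2\ge D^2>0$. Theorem~\ref{thm:rank3} therefore gives $f(k)\ge0$ for all $k\ge0$. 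If $E=0$ there is nothing to prove, so assume $E\ne0$. I now split by whether $E$ is big.

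\textbf{Case $E^2>0$.} Then $E$ is big and nef, and Theorem~\ref{thm:rank3} gives $\invchi{M}{E}\ge0$. By the second identity it suffices to show $DE\ge1$, and since $DE\in\Z_{\ge0}$ it suffices to show $DE\ne0$. Here I use the Hodge index property of the intersection form on $A^1(M)_\R$. In the basis $\alpha,x_H$ of \eqref{eq:rank-three-pairing} the form is visibly $\operatorname{diag}(1,-1,\dots,-1)$, so it has signature $(1,n-1)$. Suppose $DE=0$ with $D^2>0$. Then $E$ lies in $D^\perp$, on which the form is negative definite. This would force $E^2<0$ unless $E=0$, contradicting $E^2>0$. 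Hence $DE\ge1$ and $f(1)-f(0)\ge\invchi{M}{E}\ge0$.

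\textbf{Case $E^2=0$.} This is the step I expected to be the obstacle. Here $\invchi{M}{E}$ can be negative, so the direct identity does not suffice. The trick is that $f$ is now linear: $f(k)=f(0)+k\,c$ with $c=DE+\tfrac12EK_M$. Since $f(k)\ge0$ for every $k\ge0$ by the preliminary step, letting $k\to\infty$ forces $c\ge0$. Therefore $f(1)-f(0)=c\ge0$, which completes the proof in both cases.
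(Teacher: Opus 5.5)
Your proof is correct and follows the paper's argument essentially step for step. In the case $E^2>0$ you use the identity $\invchi{M}{D+E}-\invchi{M}{D}=DE+\invchi{M}{E}-1$ together with Theorem~\ref{thm:rank3}, and in the case $E^2=0$ you use the slope argument for the linear function $t\mapsto\invchi{M}{D+tE}$. The only cosmetic difference is how you get $DE\ge1$: you use negative definiteness of $D^\perp$ for the form $\operatorname{diag}(1,-1,\dots,-1)$, whereas the paper uses the equivalent reverse Cauchy--Schwarz bound $DE\ge\sqrt{D^2E^2}\ge1$.
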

\begin{proof}
An expression for the empty flag $E=\sum_F c_Fx_F$, with $c_F\ge0$, gives $E^2\ge0$ and $DE\ge0$, since $D$ and $E$ intersect every $x_F$ nonnegatively.

If $E^2>0$, Cauchy--Schwarz in the coordinates of \eqref{eq:rank-three-pairing} gives $DE\ge\sqrt{D^2E^2}\ge1$. Riemann--Roch \eqref{eq:surface-RR} and Theorem~\ref{thm:rank3} now give
\[
 \invchi{M}{D+E}-\invchi{M}{D}
 =\invchi{M}{E}-1+DE\ge0.
\]
If $E^2=0$, then $D+tE$ is big and nef for every integer $t\ge0$, since
\[
 (D+tE)^2=D^2+2tDE>0.
\]
By Riemann--Roch, $\invchi{M}{D+tE}$ is a polynomial of degree at most one in $t$. Theorem~\ref{thm:rank3} makes it nonnegative for every integer $t\ge0$, so its slope is nonnegative. Taking $t=1$ finishes the proof.
\end{proof}

\section{Proof of the stone game}\label{sec:stone-game}
We begin by extracting a family of integer inequalities from the legal moves.

\subsection{Integer tests}
Let $S$ be the blowup of $\PP^2_{\C}$ at $s$ distinct points, with hyperplane class $H$ and exceptional curves $E_i$. For
\[
 A=aH-\sum_i b_iE_i,\qquad
 B=hH-\sum_i u_iE_i,\qquad h,u_i\in\Z_{\ge0},
\]
Riemann--Roch gives
\[
 \chi(S,\OO_S(B))
 =1+\frac{B(B-K_S)}2
 =1+\frac{h(h+3)-\sum_i u_i(u_i+1)}2.
\]
A degree-$h$ plane form has $\binom{h+2}{2}$ coefficients, and multiplicity at least $u_i$ at a prescribed point imposes at most $\binom{u_i+1}{2}$ linear conditions. Thus $\chi(S,\OO_S(B))\ge1$ guarantees such a nonzero form, so $B$ is linearly equivalent to an effective divisor. In particular, nefness of $A$ would imply
\[
 \chi(S,\OO_S(B))\ge1\quad\Longrightarrow\quad|B|\ne\varnothing \quad \Longrightarrow\quad AB\ge0.
\]
Since $AB=ah-\sum_i b_i u_i$, the following lemma captures the numerical ``nefness'' of the stone game.

\begin{lemma}\label{lem:integer-tests}
For any legal sequence of moves of sizes $b_1,\ldots,b_s$ in the stone game, and any nonnegative integers $h,u_1,\ldots,u_s$,
\begin{equation}\label{eq:integer-tests}
 \sum_i u_i(u_i+1)\le h(h+3)
 \quad\Longrightarrow\quad
 \sum_i b_i u_i\le ah.
\end{equation}
\end{lemma}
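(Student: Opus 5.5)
The plan is to argue directly from the stone game, by double counting over piles, and not through the surface $S$. The surface is only the motivation for the shape of \eqref{eq:integer-tests}, and I will not use it.

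\textbf{Setup.} Fix a legal sequence of moves of sizes $b_1,\ldots,b_s$. For move $i$, let $P_i$ be its set of $mb_i$ chosen piles. For a pile $p$, let $T_p=\{i:p\in P_i\}$. Legality says $\sum_{i\in T_p}b_i\le a$ for every pile $p$.

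Double counting the incidences $(p,i)$ with $p\in P_i$, weighted by $u_i$ and by $b_i$, gives
\[
 m\sum_i b_iu_i=\sum_p U_p,\qquad U_p:=\sum_{i\in T_p}u_i,\qquad
 m\sum_i b_i^2=\sum_p\sum_{i\in T_p}b_i\le m a^2 .
\]
Since there are $ma$ piles, \eqref{eq:integer-tests} is equivalent to the statement that the average of $U_p$ over all piles is at most $h$. This is only an average statement, and a single pile may well exceed $h$.

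\textbf{Strategy.} I would bound $b_iu_i$ termwise by a quadratic expression in $b_i$ and $u_i$ whose total can be controlled by the two available constraints:
\[
 \sum_i b_i^2\le a^2
 \qquad\text{and}\qquad
 \sum_i u_i(u_i+1)\le h(h+3).
\]
The first of these is obtained by summing the pile capacities.

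A pure Cauchy--Schwarz estimate $\sum_i b_iu_i\le a\bigl(\sum_i u_i^2\bigr)^{1/2}$ fails. For example, take many $u_i=1$ and $h$ small; then $\sum_i u_i^2$ can be about $h^2/2+3h/2>h^2$. The linear term in $u_i(u_i+1)$ and the slack $3h$ must therefore be paid for by the integrality of the $b_i$ and by the per-pile capacity $a$.

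Concretely, I would use $2b_iu_i\le b_i^2+u_i^2$, together with the integer inequality
\[
 b_iu_i\le \tfrac12\bigl(b_i^2-b_i\bigr)+\tfrac12\bigl(u_i^2+u_i\bigr),
\]
which holds because $(b_i-u_i)^2\ge b_i-u_i$ for integers. Summing gives
\[
 \sum_i b_iu_i\le \tfrac12\Bigl(\sum_i b_i(b_i-1)+h(h+3)\Bigr).
\]
To finish this way I would need $\sum_i b_i(b_i-1)\le 2ah-h(h+3)$ whenever the right side is meaningful. That is a weaker cousin of Theorem~\ref{thm:stone-game} and is circular if used naively.

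\textbf{Main obstacle and how I would handle it.} I expect the real difficulty to be choosing the correct per-pile inequality. My plan is to rescale: for a parameter $\lambda>0$ to be optimized (ideally $\lambda=a/h$), prove the per-pile bound
\[
 U_p\le \frac{1}{2}\Bigl(\lambda^{-1}\sum_{i\in T_p}b_i+\lambda\,\Phi_p\Bigr),
\]
where $\Phi_p$ is a share of $\sum_i u_i(u_i+1)$ distributed over the piles of $P_i$ proportionally to $1/(mb_i)$. I would then sum over $p$ using $\sum_{i\in T_p}b_i\le a$ and $\sum_i u_i(u_i+1)\le h(h+3)$.

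Failing a clean choice of weights, I would fall back on a reduction. By convexity and an exchange argument, one may assume all $u_i\in\{k,k+1\}$ for a single $k$. Then the problem becomes a linear bound on $\sum_{i\in I}b_i$ over sets $I$ of at most $N$ moves, and for such sets the capacity bound $\sum_{i\in I}mb_i\cdot(\text{stuff})\le ma\cdot(\cdots)$ can be checked directly. The exchange/convexity reduction is the step I expect to require the most care.
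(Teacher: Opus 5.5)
\textbf{There is a genuine gap.} Neither your rescaling strategy nor your fallback reduction proves the lemma, and the key idea is missing.

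\textbf{Why the rescaling strategy cannot work.} Any per-pile inequality of the kind you propose, once summed over all piles, retains from legality only the aggregate bound $\sum_i b_i^2\le a^2$, alongside $\sum_i u_i(u_i+1)\le h(h+3)$. These two constraints, even with integrality, do not imply the conclusion. Take $a=5$, $b=(3,3)$, $h=1$, $u=(1,1)$:
\begin{itemize}
\item $\sum_i b_i^2=18<25$;
\item $\sum_i u_i(u_i+1)=4=h(h+3)$;
\item yet $\sum_i b_iu_i=6>5=ah$.
\end{itemize}
This tuple is not a legal game only because the two moves would have to share a pile. With correctly normalized weights, $u_i(u_i+1)/b_i$ per pile, your weighted AM--GM yields at best $\sum_i b_iu_i\le a\sqrt{h(h+3)}$. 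That is exactly the Cauchy--Schwarz strength you already recognized as insufficient. With the weights $1/(mb_i)$ as you wrote them, the per-pile inequality is actually false for $m\ge3$, for every $\lambda$: take a single move with $b_1=u_1=1$.

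\textbf{Why the fallback is unjustified.} The reduction to $u_i\in\{k,k+1\}$ does not follow. Shifting a unit from $u_i$ to $u_j$ lowers $\sum u_i(u_i+1)$, but it changes $\sum b_iu_i$ by $b_j-b_i$, which can be negative. So an optimal $u$ need not be nearly constant. Even after such a reduction, your ``direct check'' would need the argument below. For example, with five moves and $u_i=1$, $h=2$, showing the total is at most $2a$ requires finding one pile lying in three of the moves.

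\textbf{The missing idea.} Use the capacity of a single pile and induct on $h$. This exploits exactly the fact you noted: individual piles can exceed the average.
\begin{itemize}
\item Suppose $\sum_i b_iu_i>ah$. Your double count then gives a pile $j$ with $U_j\ge h+1$.
\item Lower each positive $u_i$ with $i\in T_j$ by one. Since $u(u+1)-(u-1)u=2u$, the sum $\sum_iu_i(u_i+1)$ drops by $2U_j\ge2(h+1)$, to at most $(h-1)(h+2)=(h-1)\bigl((h-1)+3\bigr)$.
\item Meanwhile $\sum_ib_iu_i$ drops by at most $\sum_{i\in T_j}b_i\le a$, so it still exceeds $a(h-1)$.
\item This contradicts the case $h-1$. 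The base case $h=0$ is trivial, since it forces every $u_i=0$.
\end{itemize}
This is precisely the paper's proof. The per-pile capacity at one well-chosen pile, not an averaged version of it, is what supplies the integrality gain.
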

\begin{proof}

We argue by induction on $h$, the case $h=0$ is immediate. Suppose that $h,u_1,\ldots,u_s$ satisfy the hypothesis of \eqref{eq:integer-tests} but $\sum_i b_i u_i>ah$. Let $S_i$ be the set of piles chosen by move $i$. Thus $|S_i|=mb_i$, and the legality of the stone game gives
\[
 \sum_{i:j\in S_i}b_i\le a
 \qquad\text{for every pile }j.
\] Then
\[
 \sum_{j=1}^{ma}\sum_{i:j\in S_i}u_i
 =m\sum_i b_i u_i>mah.
\]
Hence some pile $j$ satisfies $\sum_{i:j\in S_i}u_i\ge h+1$. Decrease every positive $u_i$ with $j\in S_i$ by one, obtaining $u'_i\ge0$. We have
\[
 \sum_i u_i(u_i+1)-\sum_i u'_i(u'_i+1)
 =2\sum_{i:j\in S_i}u_i\ge2(h+1).
\]
Consequently,
\[
 \sum_i u'_i(u'_i+1)
 \le h(h+3)-2(h+1)=(h-1)(h+2),
\]
whereas
\[
 \sum_i b_i u'_i
 =\sum_i b_i u_i-\sum_{\substack{i:j\in S_i\\u_i>0}}b_i
 >ah-a=a(h-1).
\]
This contradicts the induction hypothesis.
\end{proof}
\subsection{An auxiliary complex surface}

When a rank-three matroid is realized by a line arrangement in $\PP^2_{\C}$, the corresponding wonderful compactification is the blowup of $\PP^2_{\C}$ at the intersection points, and the numerical inequality follows from classical Kawamata--Viehweg vanishing. 

The intersection pairing \eqref{eq:rank-three-pairing} does not depend on the positions of the points. Thus, on any blowup of $\PP^2_{\C}$ at $s$ distinct points, the self-intersection of $A=aH-\sum_i b_iE_i$ and the Euler characteristic $\chi(S,\OO_S(-A))$ are given by the same formulas. After excluding the immediate case handled below, our strategy is to find a special blowup on which $A$ is big and nef, and then apply Kawamata--Viehweg vanishing. 

The auxiliary surface that we construct is a blowup of $\PP^2_{\C}$ at very general points on a fixed smooth cubic in the setting studied by Harbourne \cite{Harbourne}. For a description of negative curves on such surfaces, see \cite[Theorem~2.4]{generalpointsoncubic}. On this special blowup, every integral curve $B$ other than the exceptional curves and the strict transform of the cubic satisfies $\chi(S,\OO_S(B))\ge1$. Thus Lemma~\ref{lem:integer-tests}, together with the intersections with those distinguished curves, will suffice to prove nefness.

\begin{proof}[Proof of Theorem~\ref{thm:stone-game}]
Let $b_1,\ldots,b_s$ be the move sizes. Fix a smooth plane cubic $C\subset\PP^2_{\C}$ and choose $P_1,\ldots,P_s$ very general on $C$. Then
\begin{equation}\label{eq:independence}
 \OO_C(h)\not\simeq\OO_C\!\left(\sum_i u_iP_i\right)
 \quad\text{whenever }h\in\Z_{\ge1},\ u_i\in\Z_{\ge0}.
\end{equation}

Let $S=\Bl_{P_1,\ldots,P_s}\PP^2_{\C}$ and $A=aH-\sum_i b_iE_i$, with notation as above, and let $\widetilde C$ be the strict transform of $C$. Then
\[
 K_S=-\widetilde C,\qquad
 A^2=a^2-\sum_i b_i^2>0.
\]
Riemann--Roch gives
\begin{equation}\label{eq:stone-RR}
 \chi(S,\OO_S(-A))
 =1+\frac{A^2-A\widetilde C}{2}
 =\binom{a-1}{2}-\sum_i\binom{b_i}{2}.
\end{equation}
Our goal is to show that this Euler characteristic is nonnegative. If $A\widetilde C\le0$, it is positive because $A^2>0$. We may therefore assume $A\widetilde C>0$.

We claim that $A$ is nef. Its intersections with the exceptional curves are $AE_i=b_i\ge0$, and $A\widetilde C>0$. Let $B\subset S$ be any other integral curve. It has class $hH-\sum_i u_iE_i$ with $h\ge1$ and $u_i\ge0$. Since $B$ and $\widetilde C$ are distinct, $B\widetilde C\ge0$. Equality would force the intersection divisor of the plane image of $B$ with $C$ to be $\sum_i u_iP_i$, contrary to \eqref{eq:independence}. Hence $B\widetilde C>0$. 
Writing $p_a(B)\ge0$ for the arithmetic genus of $B$, adjunction and Riemann--Roch give
\[
 \chi(S,\OO_S(B))
 =1+\frac{B(B-K_S)}2
 =p_a(B)-BK_S
 =p_a(B)+B\widetilde C>0.
\]
Lemma~\ref{lem:integer-tests} now gives $AB\ge0$, proving the claim.

Since $A^2>0$, the nef divisor $A$ is big. Classical Kawamata--Viehweg vanishing \cite{Kawamata,Viehweg}, followed by Serre duality, gives
\[
 \chi(S,\OO_S(-A))=h^2(S,\OO_S(-A))\ge0.
\]
By \eqref{eq:stone-RR}, this is the desired inequality.
\end{proof}

\begin{remark}[Outline of a combinatorial proof]
\label{rem:cremona-proof}
The integer tests also lead to a combinatorial proof motivated by quadratic Cremona transformations. After sorting the $b_i$ in nonincreasing order and adjoining zeros if necessary, whenever $b_1+b_2+b_3>a$, replace
\[
 (a;b_1,b_2,b_3)\longmapsto
 (2a-b_1-b_2-b_3;\ a-b_2-b_3,\ a-b_1-b_3,\ a-b_1-b_2),
\]
and leave the other entries unchanged. This preserves the tests in \eqref{eq:integer-tests}, the self-intersection $a^2-\sum_i b_i^2$, and the Euler characteristic in \eqref{eq:stone-RR}, while decreasing $a$. After reordering at each step, one reaches the terminal case $b_1+b_2+b_3\le a$, where the desired bound is an elementary inequality. These transformations preserve the numerical tests; the transformed tuple need not itself arise from a legal game.
\end{remark}

\bibliographystyle{amsalpha}
\bibliography{bibliography}
\end{document}